\documentclass[10pt,a4paper]{article}
\usepackage[utf8]{inputenc}
\usepackage[english]{babel}
\usepackage{amsmath}
\usepackage{amsfonts}
\usepackage{amssymb}
\usepackage{amsthm}
\usepackage{bm}
\usepackage{color}
\usepackage{float}
\usepackage{dsfont}
\usepackage{titlesec}

\usepackage{tikz}	
\usetikzlibrary{calc}

\usepackage[left=2cm,right=2cm,top=2cm,bottom=2cm]{geometry}

\numberwithin{equation}{section}

\newtheorem{thm}{Theorem}
\numberwithin{lem}{section}

\theoremstyle{remark}

\numberwithin{rem}{section} 

\newcommand{\R}{\mathbb{R}}
\newcommand{\Sp}{\mathbb{S}}
\newcommand{\eqdef}{\overset{def}{=}}	

\newcommand{\supp}{\mathrm{supp}}

\newcommand{\footremember}[2]{%
    \footnote{#2} 
    \newcounter{#1}
    \setcounter{#1}{\value{footnote}}%
}

\title{An example of non-uniqueness for  \\  
the weighted Radon transforms\\ along hyperplanes in multidimensions}

\author{F.O. Goncharov\footremember{cmap}{CMAP, Ecole Polytechnique, CNRS, Universit\'{e} Paris-Saclay, 91128  Palaiseau, France; 
\newline \indent email: fedor.goncharov.ol@gmail.com} 
	\and	 
		R. G. Novikov$^*$\footnote{IEPT RAS, 117997  
			Moscow, Russia;
			\newline\indent email: roman.novikov@polytechnique.edu}}
		
\begin{document}
\maketitle
\abstract{We consider the weighted Radon transforms $R_W$ along hyperplanes in $\R^d, \, d\geq 3$, with strictly positive weights $W= W(x,\theta), \, x\in \R^d, \, \theta\in \Sp^{d-1}$.  We construct an example of such a transform with non-trivial kernel in the space of infinitely smooth compactly supported functions. In addition, the related weight $W$ is infinitely smooth almost everywhere and is bounded. Our construction is based on the famous example of non-uniqueness of J. Boman (1993) for the weighted Radon transforms in $\R^2$ and on a recent result of F. Goncharov and R. Novikov (2016).\\

\noindent \textbf{Keywords:} weighted Radon transforms, injectivity, non-injectivity\\

\noindent \textbf{AMS Mathematics Subject Classification:} 44A12, 65R32
}

\section{Introduction}\label{sect.introd}
We consider the weighted Radon transforms $R_W$, defined by the formulas:
\begin{align} \label{RW_def}
	&R_Wf(s,\theta) = \int\limits_{x\theta=s} W(x,\theta)
	f(x)\,  dx, \, (s,\theta)\in \R\times \Sp^{d-1}, \, x\in \R^d, \, d\geq 2,
\end{align}
where $W = W(x,\theta)$ is the weight, $f = f(x)$ is a test function on $\R^d$. 
\par We assume that $W$ is real valued, bounded and strictly positive, i.e.: 
\begin{align}\label{weight.cond.pos}
	W = \overline{W} \geq c > 0, \, W\in L^{\infty}(\R^d\times \Sp^{d-1}),
\end{align}
where $\overline{W}$ denotes the complex conjugate of $W$, $c$ is a constant.
\par If $W\equiv 1$, then $R_W$ is reduced to the classical Radon transform $R$ along hyperplanes in $\R^d$. This transform is invertible by the classical Radon inversion formulas; see \cite{radon1917}. 
\par If $W$ is strictly positive, $W\in C^{\infty}(\R^d\times \Sp^{d-1})$ and $f\in C_0^{\infty}(\R^d)$, then in \cite{beylkin1984inversion} the inversion of $R_W$ is reduced to solving a Fredholm type linear integral equation. Besides, in \cite{boman1987support} it was proved that $R_W$ is injective (for example, in $L_0^2(\R^d)$) if $W$ is real-analytic and strictly positive. In addition, an example of $R_W$ in $\R^2$ with infinitely smooth strictly positive $W$ and with non-trivial kernel $\mathrm{Ker}R_W$ in $C_0^{\infty}(\R^2)$ was constructed in \cite{boman1993example}. Here $C_0^{\infty}, \, L_0^2$ denote the spaces of functions from $C^{\infty}, \, L^2$ with compact support, respectively.
\par In connection with the most recent progress in inversion methods for weighted Radon transforms $R_W$, see \cite{goncharov2016iterative}.
\par We recall also that inversion methods for $R_W$ in $\R^3$ admit applications in the framework of emission tomographies (see \cite{goncharov2016analog}).

\par In the present work we construct an example of $R_{W}$ in $\R^d,\, d\geq 3$, with 
non-trivial kernel $\mathrm{Ker}R_W$ in $C_0^{\infty}(\R^d)$. The related $W$ satisfies \eqref{weight.cond.pos}. In addition, our weight $W$ is infinitely smooth almost everywhere on $\R^d\times \Sp^{d-1}$. 
\par In our construction we proceed from results of \cite{boman1993example} and  \cite{goncharov2016analog}.

\par In Section~\ref{sect.weight.transf}, in particular, we recall the result of \cite{goncharov2016analog}.
\par In Section~\ref{sect.bom.exmp} we recall the result of \cite{boman1993example}.
\par In Section~\ref{sect.main.thm} we obtain the main result of the present work.


\section{Relations between the Radon and the ray transforms}\label{sect.weight.transf} 
We consider also the weighted ray transforms $P_w$ in $\R^d$, defined by the formulas:
\begin{align}\label{weighted.ray.def}
	&P_wf(x,\theta) = \int\limits_{\R}w(x + t\theta, \theta) f(x + t\theta)\, dt,
	\, (x,\theta)\in T\Sp^{d-1},\\
	&T\Sp^{d-1} = \{(x,\theta) \in \R^d \times \Sp^{d-1} : x\theta = 0\}, \, d \geq 2,
\end{align}
where $w=w(x,\theta)$ is the weight, $f=f(x)$ is a test-function on $\R^d$.
\par We assume that $w$ is real valued, bounded and strictly positive, i.e.:
\begin{equation}
	w = \overline{w} \geq c > 0, \, w\in 
	 L^{\infty}(\R^d\times \Sp^{d-1}).
\end{equation}
We recall that $T\Sp^{d-1}$ can be interpreted as the set of all oriented rays in $\R^d$. In particular, if $\gamma = (x,\theta)\in T\Sp^{d-1}$, then
\begin{align}
	&\gamma = \{y\in \R^d : y = x + t\theta, \, t\in \R\},
\end{align}
where $\theta$ gives the orientation of $\gamma$.

\par We  recall that for $d=2$, transforms $P_w$ and $R_W$ are equivalent up to the following change of variables:
\begin{align}\label{R_WtoP_W}
	&R_Wf(s,\theta) = P_wf(s\theta, \theta^\perp), s\in \R, \, \theta\in \Sp^1,\\
	\begin{split} \label{Wtow}
	&W(x,\theta) = w(x,\theta^\perp), \, x\in \R^2, \, 
	\theta \in \Sp^1, 	\\ 
	&\theta^\perp = (-\sin\phi, \cos\phi)
	\text{ for } \, \theta = (\cos\phi, \sin\phi), \,  
	\phi\in [0,2\pi), 	
	\end{split}
\end{align}
where $f$ is a test-function on $\R^2$.
\par For $d=3$, the transforms $R_W$ and $P_w$ are 
related by the following formulas (see \cite{goncharov2016analog}):  
\begin{align}\label{reduction.formula1}
	&R_Wf(s,\theta) = \int\limits_{\R}
	P_wf(s\theta + \tau[\theta,\alpha(\theta)], \, \alpha(\theta))\, d\tau, \, (s,\theta)\in \R\times\Sp^2,\\ \label{reduction.formula2}
	&W(x,\theta) = w(x,\alpha(\theta)), \, x\in \R^3,\, \theta \in \Sp^2,\\
	\label{alpha_theta_3d}
	&\alpha(\theta) = \begin{cases}
		\dfrac{[\eta,\theta]}{|[\eta,\theta]|}, 
		\text{ if } \theta\neq \pm\eta,\\
		\text{ any vector }e\in \Sp^2, \text{ such that } e \perp \theta,	
		\text{ if } \theta = \pm \eta,
	\end{cases}
\end{align}
where $\eta$ is some fixed vector from $\Sp^2$, $[\cdot, \cdot]$ denotes the standard vector product in $\R^3$, 
$\perp$ denotes the orthogonality of vectors. 
Actually, formula \eqref{reduction.formula1} gives an expression for $R_Wf$ on $\R\times \Sp^2$ in terms 
of $P_wf$ restricted to the rays $\gamma = \gamma(x,\theta)$,  such that $\theta\perp \eta$, where $W$ and $w$ are related by \eqref{reduction.formula2}.

\par Below we present analogs of \eqref{reduction.formula1}-\eqref{reduction.formula2} for $d > 3$.
\par Let
\begin{align}\label{plane_d}
	&\Sigma(s,\theta) = \{x\in \R^d : x\theta = s\}, \, s\in \R, \, \theta\in \Sp^{d-1},\\
	\label{plane_2D}
	&\Xi(v_1,\dots, v_k) = \mathrm{Span}\{v_1,\dots, v_k\}, v_i\in \R^d, \, i=\overline{1,k}, 1\leq k \leq d,\\ \label{Msphere}
	&\Theta(v_1,v_2) = \{\theta\in \Sp^{d-1} : \theta \perp v_1, \theta\perp v_2\}
	\simeq \Sp^{d-3}, \, v_1, v_2 \in \R^d, \, v_1\perp v_2, \\
	\label{constant_basis_choice}
	&(e_1,e_2, e_3, \dots , e_d) 
	\text{ - be some fixed orthonormal, positively oriented basis in } \R^d.
\end{align} 
If $(e_1,\dots, e_d)$ is not specified otherwise, it is assumed that $(e_1,\dots, e_d)$ is the standard basis 
in $\R^d$.

\par For $d\geq 3$, the transforms $R_W$ and $P_w$ are related by the following formulas:
\begin{align}\label{RW_PW_d}
	&R_Wf(s,\theta) = \int\limits_{\R^{d-2}} P_wf(s\theta + 
	\sum\limits_{i=1}^{d-2}\tau_i \beta_i(\theta), \alpha(\theta))\, 
	d\tau_1\dots d\tau_{d-2}, \, (s,\theta)\in\R\times\Sp^{d-1},\\ \label{W_w_d}
	&W(x,\theta) = w(x,\alpha(\theta)), \, x\in \R^d, \, 
	\theta\in \Sp^{d-1},
\end{align}
where $\alpha(\theta), \, \beta_i(\theta), \, i = \overline{1,d-2}$, are defined as follows:
\begin{align}
	\label{direction_alpha}
	&\alpha(\theta) = \begin{cases} 
		\text{direction of one-dimensional intersection } 
		\Sigma(s,\theta)\cap \Xi(e_1,e_2), \text{ where}\\ 
		\text{the orientation of } \alpha(\theta) \text{ is chosen such that }
		\det(\alpha(\theta), \theta, e_3, \dots, e_d) > 0,
		\text{ if }\theta\not\in \Theta(e_1,e_2),
		\vspace{0.1cm}\\ 
		\text{any vector }e\in \Sp^{d-1}\cap \Xi(e_1,e_2), \text{ if } \theta\in \Theta(e_1,e_2),
	\end{cases}\\ 
	\label{basis_on_hypp}
	&(\alpha(\theta), \beta_1(\theta), \dots, \beta_{d-2}(\theta))
	\text{ is an orthonormal basis on }\Sigma(s,\theta),
\end{align}
and $\Sigma(s,\theta), \,\Theta(e_1,e_2)$ are given by \eqref{plane_d}, \eqref{Msphere}, 
respectively. Here, due to the condition $\theta\not\in \Theta(e_1,e_2)$:
\begin{equation}\label{intersect_dim}
\dim(\Sigma(s,\theta)\cap \Xi(e_1,e_2)) = 1.
\end{equation}
\par Formula \eqref{intersect_dim} is proved in Section~\ref{sect_proof_formula}.
\par Note that formulas \eqref{RW_PW_d}-\eqref{intersect_dim} are also valid for $d=3$. In this case these formulas are reduced to \eqref{reduction.formula1}-\eqref{alpha_theta_3d}, where $e_3 = -\eta$.
\par Note that, formula \eqref{RW_PW_d} gives an expression for $R_Wf$ on $\R\times \Sp^{d-1}$ in terms of $P_{w}f$ restricted to the rays $\gamma = (x,\alpha)$, such that $\alpha \in \Sp^{d-1}\cap \Xi(e_1,e_2)$.\\
\textbf{Remark 1.} In \eqref{direction_alpha} one can also write:
\begin{equation}\label{hodge_star}
	\alpha(\theta) = (-1)^{d-1} \star (\theta \wedge e_3\wedge\dots \wedge e_d),
	\text{ if } \theta\not\in \Theta(e_1,e_2),
\end{equation}
where $\star$-denotes the Hodge star, $\wedge$ - is the exterior product in $\Lambda^*\R^d$ (exterior algebra on $\R^d$); see, for example, Chapters 2.1.c, 4.1.c of  \cite{shigeyuki2001diff}.

\par Note that the value of the integral in the right hand-side of \eqref{RW_PW_d} does not 
depend on the particular choice of $(\beta_1(\theta), \dots, \beta_{d-2}(\theta))$ of \eqref{basis_on_hypp}. 
\par Note also that, due  to \eqref{reduction.formula2}, \eqref{alpha_theta_3d}, \eqref{W_w_d}, \eqref{direction_alpha}, the weight $W$ is defined everywhere on $\R^d\times \Sp^{d-1}, \, d\geq 3$. In addition,  
this $W$ has the same smoothness as $w$ in $x$ on $\R^d$ and in $\theta$ on $\Sp^{d-1}\backslash \Theta(e_1,e_2)$,
where $\Theta(e_1,e_2)$ is defined in \eqref{Msphere} and has zero Lebesgue measure on $\Sp^{d-1}$. 

\section{Boman's example}\label{sect.bom.exmp}
\par  For $d=2$, in \cite{boman1993example} there were constructed a weight $W$ and a function $f$, such that:
\begin{align}\label{ray_b_props}
	&R_{W}f \equiv 0 \text{ on } \R\times \Sp^1, \\ \label{w_b_props}
	&1/2 \leq W \leq 1, W\in C^{\infty}(\R^2\times \Sp^1),  \\ \label{f_b_properties}
	&f\in C_0^{\infty}(\R^2), \, f\not\equiv 0, \, \supp \, f\subset\overline{B^2}
	=\{x\in \R^2 : |x|\leq 1\}.
\end{align}
In addition, as a corollary of \eqref{R_WtoP_W}, \eqref{Wtow}, \eqref{ray_b_props}-\eqref{f_b_properties}, we have that 
\begin{align}\label{ray_boman}
	&P_{w_0} f_0 \equiv 0 \text{ on } T\Sp^1,\hspace{5.2cm}\\
	&1/2 \leq w_0\leq 1, \, w_0 \in C^{\infty}(\R^2\times \Sp^1),\\
	&f_0\in C_0^{\infty}(\R^2), \, f_0\not\equiv 0, \, \supp \, f\subset\overline{B^2}
	=\{x\in \R^2 : |x|\leq 1\},
\end{align}
where
\begin{align}\label{ray_boman_weight}
	&w_0(x, \theta) = W(x, -\theta^\perp), \, x\in \R^2,\, \theta\in \Sp^1,\\
	\label{ray_boman_func}
	&f_0 \equiv f.
\end{align}

\section{Main results}\label{sect.main.thm}
Let
\begin{align}
	&B^{d} = \{x\in \R^d : |x| < 1\}, \\
	&\overline{B^d} = \{x\in \R^d : |x| \leq 1\},\\ \label{choice_eta}
	&(e_1,\dots, e_d) \text{ - be the canonical basis in } \R^d.
\end{align}
\begin{thm}\label{main.thm}
	There are $W$ and $f$, such that
	\begin{align}\label{RW_zero}
		&R_Wf \equiv 0 \text{ on } \R\times \Sp^{d-1},\\ \label{Wf_props}
		&W \text{ satisfies \eqref{weight.cond.pos}}, \, f\in C_0^{\infty}(\R^d), \, 
		f\not \equiv 0,
	\end{align}
	where $R_W$ is defined by \eqref{RW_def}, $d\geq 3$. 
	In addition, 
	\begin{equation}\label{almost_sur_smooth_prop}
		1/2 \leq W\leq 1, \, W \text{ is } C^{\infty}\text{-smooth on } \R^d\times (\Sp^{d-1}\backslash 
		\Theta(e_1,e_2)),
	\end{equation}
	where $\Theta(e_1,e_2)$ is defined by \eqref{Msphere}.	Moreover, weight $W$ and function $f$ are given by formulas \eqref{W_w_d}, \eqref{rayweight.def}-\eqref{psi.def} in terms of the J. Boman's weight $w_0$ and function $f_0$ of \eqref{ray_boman_weight}, \eqref{ray_boman_func}.
\end{thm}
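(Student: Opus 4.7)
The plan is to lift Boman's two-dimensional counterexample $(w_0, f_0)$ recalled in Section~\ref{sect.bom.exmp} to dimension $d \geq 3$ by means of the reduction formulas \eqref{RW_PW_d}--\eqref{W_w_d}. The construction consists of three ingredients: a smooth compactly supported cutoff $\psi \in C_0^\infty(\R^{d-2})$ in the transverse coordinates, a tensor-product test function $f(x) = f_0(x_1, x_2)\,\psi(x_3, \dots, x_d)$, and a $d$-dimensional ray weight $w$ whose restriction to directions $\alpha \in \Sp^{d-1} \cap \Xi(e_1, e_2)$ is $w(y, \alpha) = w_0((y_1, y_2), (\alpha_1, \alpha_2))$, with an arbitrary bounded, strictly positive extension off of $\Xi(e_1,e_2)$. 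Then $W$ is produced from $w$ via \eqref{W_w_d}.

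With these choices, the key calculation is immediate. Whenever $\alpha \in \Sp^{d-1} \cap \Xi(e_1, e_2)$ its last $d-2$ components vanish, hence for any $x \in \R^d$ with $x \cdot \alpha = 0$ one obtains
\begin{equation*}
P_w f(x, \alpha) = \psi(x_3, \dots, x_d)\, P_{w_0} f_0\bigl((x_1, x_2),(\alpha_1, \alpha_2)\bigr) = 0
\end{equation*}
by Boman's identity \eqref{ray_boman}. Substituting this into the right-hand side of \eqref{RW_PW_d} with $\alpha = \alpha(\theta)$, for every admissible choice of $\alpha(\theta)$ (including the ones on $\Theta(e_1,e_2)$), gives $R_W f \equiv 0$ on $\R \times \Sp^{d-1}$, which is \eqref{RW_zero}. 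Compactness of $\supp f \subset \supp f_0 \times \supp \psi$ and $f \not\equiv 0$ are clear from the corresponding properties of $f_0$ and $\psi$.

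It remains to verify \eqref{almost_sur_smooth_prop}. The bounds $1/2 \leq W \leq 1$ transfer directly from $w_0$ since $W(x, \theta) = w(x, \alpha(\theta))$ and $\alpha(\theta) \in \Xi(e_1, e_2)$ always. The smoothness claim follows from two observations: first, $\theta \mapsto \alpha(\theta)$ is $C^\infty$ on $\Sp^{d-1} \setminus \Theta(e_1, e_2)$, which is transparent from the Hodge-star description \eqref{hodge_star}; second, the restriction of $w$ to $\R^d \times (\Sp^{d-1} \cap \Xi(e_1, e_2))$ is $C^\infty$ by construction. Composition yields $W \in C^\infty(\R^d \times (\Sp^{d-1} \setminus \Theta(e_1, e_2)))$.

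The only point that needs care is that $w$ is not uniquely prescribed off $\Sp^{d-1} \cap \Xi(e_1, e_2)$; however, since the reduction formula only samples $w$ at $\alpha \in \Xi(e_1, e_2)$ and $W$ depends on $w$ only through those samples, the freedom in extending $w$ is entirely innocuous. I therefore do not anticipate any serious obstacle in executing this plan; the whole argument reduces to inserting the tensor structure of $f$ and $w$ into \eqref{RW_PW_d} and invoking \eqref{ray_boman}.
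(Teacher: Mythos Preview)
Your proposal is correct and follows essentially the same route as the paper: define $f$ as the tensor product $f_0(x_1,x_2)\,\psi(x_3,\dots,x_d)$ and $w(x,\alpha)=w_0(x_1,x_2,\alpha_1,\alpha_2)$ for $\alpha\in\Sp^{d-1}\cap\Xi(e_1,e_2)$, compute $P_wf(x,\alpha)=\psi(x_3,\dots,x_d)\,P_{w_0}f_0=0$ by Boman's identity, and feed this into \eqref{RW_PW_d} to obtain $R_Wf\equiv 0$. Your verification of the bounds and smoothness of $W$ via the Hodge-star expression \eqref{hodge_star} is also exactly what the paper invokes.
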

\noindent \textbf{Remark 2.} According to \eqref{W_w_d}, \eqref{direction_alpha},  $W(x,\theta)$ for $\theta\in \Theta(e_1,e_2)$ can be specified as follows:
	\begin{equation}\label{W_overdefined}
		W(x,\theta) = W(x_1,\dots, x_d, \theta) \eqdef w_0(x_1,x_2, e_1), \, \theta\in \Theta(e_1,e_2), \, x\in \R^d.	
	\end{equation}

\begin{proof}[Proof of Theorem~\ref{main.thm}]
We define
\begin{align}\label{rayweight.def}
	&w(x,\alpha) = w(x_1,\dots,x_d, \alpha) \eqdef w_0(x_1,x_2,\alpha_1,\alpha_2),\\ \label{test_func_def}
	&f(x) = f(x_1,\dots,x_d) \eqdef \psi(x_3,\dots, x_d) f_0(x_1,x_2),\\ \nonumber
	&\text{for } 
	x = (x_1,\dots,x_d)\in \R^d,
	\, \alpha = (\alpha_1, \alpha_2, 0, \dots ,0)\in \Sp^{d-1}\cap \Xi(e_1,e_2)\simeq \Sp^1,
\end{align}
where 
\begin{align} 
	\label{psi.def}
	&\psi\in C_0^{\infty}(\R^{d-2}), \, \supp\,  \psi = 
	\overline{B^{d-2}} 	\text{ and } \psi(x) > 0 \text{ for } x\in B^{d-2}.
\end{align}
From \eqref{weighted.ray.def}, \eqref{ray_boman},  \eqref{rayweight.def}-\eqref{psi.def} it follows that:
	\begin{align}\label{PW_alpha_zero}
		\begin{split}
		P_wf(x,\alpha) &= \int\limits_{\R}
		w(x_1 +t\alpha_1 ,x_2 + t\alpha_2, x_3,\dots, x_d, \alpha)
		f(x_1 +t\alpha_1 ,x_2 + t\alpha_2, x_3, \dots, x_d)\,dt\\
		&= \psi(x_3, \dots, x_d)\int\limits_{\R}w_0(x_1 +t\alpha_1 ,x_2 + t\alpha_2, \alpha_1, \alpha_2)
		f_0(x_1 +t\alpha_1, x_2 + t\alpha_2)\, dt\\
		&= \psi(x_3,\dots, x_d) P_{w_0}f_0(x_1,x_2, \alpha_1,\alpha_2) = 0
		\text{ for any }\alpha = (\alpha_1,\alpha_2, 0,\dots , 0)\in \Xi(e_1,e_2)\cap \Sp^{d-1}\simeq \Sp^1. 
		\end{split}
	\end{align}
		Properties \eqref{RW_zero}-\eqref{almost_sur_smooth_prop} follow from \eqref{W_w_d}-\eqref{basis_on_hypp}, \eqref{hodge_star}, \eqref{w_b_props}, \eqref{f_b_properties}, \eqref{W_overdefined}, \eqref{rayweight.def}.
	\par Theorem~\ref{main.thm} is proved.
\end{proof}

\section{Proof of formula \eqref{intersect_dim}}\label{sect_proof_formula}
\par Note that 
\begin{equation}
	\dim(\Xi(e_1,e_2)) + \dim(\Sigma(s,\theta)) = d+1 > d,
\end{equation}
which implies that the intersection $\Sigma(s,\theta)\cap \Xi(e_1,e_2)$ is one of the following:
\begin{enumerate}
 \item The intersection is the one dimensional line $l = l(s,\theta)$:
\begin{equation}\label{line_intersection}
	l(s,\theta) = \{x\in \R^d : x = x_0(s,\theta) + \alpha(\theta) t ,\, t\in \R\}, \, 
	\alpha(\theta)\in \Sp^2,
\end{equation}
	where $x_0(s,\theta)$ is an arbitrary point of $\Sigma(s,\theta)\cap \Xi(e_1,e_2)$,  the orientation of $\alpha(\theta)$ is chosen such that:
	\begin{equation}\label{direct.alpha}
		\det(\alpha(\theta), \, \theta, \, e_3, \dots, e_d) > 0.
	\end{equation}
	Condition \eqref{direct.alpha} fixes uniquely the direction of
	 $\alpha(\theta)$ of \eqref{line_intersection}.
	\par Formulas \eqref{plane_d}, \eqref{plane_2D}, \eqref{Msphere} imply that \eqref{direct.alpha} can hold
	if and only if $\theta\not\in \Theta(e_1,e_2)$.
	\item The intersection is the two-dimensional plane $\Xi(e_1,e_2)$. Formulas \eqref{plane_d}, \eqref{plane_2D} imply that it is the case if and only if 
	\begin{equation}
		s = 0,\, \theta\perp e_1, \, \theta\perp e_2.
	\end{equation}
	\item The intersection is an empty set. Formulas \eqref{plane_d},
	 \eqref{plane_2D} imply that it is the case if and only if
	\begin{equation}
		s \neq 0,\, \theta\perp e_1, \, \theta\perp e_2.
	\end{equation}
\end{enumerate}
\par Note that 
\begin{align}\label{cases_occurence}
 \begin{split} 
	&\text{cases 2 and 3 occur if and only if } \theta \perp e_1, \, 
	\theta \perp e_2, \text{ i.e., }\theta\in \Theta(e_1,e_2).
 \end{split}
\end{align}
\par This completes the proof of formula \eqref{intersect_dim}.

\section{Acknowledgments}
This work is partially supported by the PRC $n^{\circ}$ 1545 CNRS/RFBR: \'{E}quations quasi-lin\'{e}aires, probl\`{e}mes inverses et leurs applications.

\bibliographystyle{alpha}

\end{document}